\newtheorem{theorem}{Theorem}
\newtheorem{lemma}[theorem]{Lemma}
\newtheorem{proposition}[theorem]{Proposition}
\begin{document}

\title[Trigonometric polynomials]{When is a trigonometric polynomial not a trigonometric polynomial?}
\thanks{This article was inspired by a classroom discussion that arose during a linear algebra course taught by the first author in Fall 2010.}

\author[J. Borzellino]{Joseph E. Borzellino}
\address{Department of Mathematics, California Polytechnic State University, San Luis Obispo, CA 93407 USA}
\email{jborzell@calpoly.edu}

\author[M. Sherman]{Morgan Sherman}
\address{Department of Mathematics, California Polytechnic State University, San Luis Obispo, CA 93407 USA}
\email{sherman1@calpoly.edu}

\maketitle

Mathematicians generally accept \cites{MR0519124, MR924157} that the term \emph{trigonometric polynomial}, refers to a function $f(t)\in C^\infty(\mathbb{R},\mathbb{C})$ which can be expressed in the following form:
\begin{equation*}\label{trigpoly}
	f(t) = \sum_{n=0}^k a_n \cos (nt) + \sum_{n=1}^k b_n \sin (nt)
\end{equation*}
for some nonnegative integer $k$ and complex numbers $a_0, \ldots, a_k, b_1, \ldots, b_k\in\mathbb{C}$. Trigonometric polynomials and their series counterparts, the
\emph{Fourier series}, play an important role in many areas of pure and applied mathematics and are likely to be quite familiar to the reader.
When reflecting on the terminology, however, it is reasonable to wonder why the term \emph{trigonometric polynomial} is not reserved for a function $g(t)$ of the form
\begin{equation*}\label{alttrigpoly}
	g(t) = \sum_{n=0}^k \alpha_n \cos^n (t) + \sum_{n=1}^k \beta_n \sin^n(t)
\end{equation*}
for some nonnegative integer $k$ and complex numbers $\alpha_0, \ldots, \alpha_k, \beta_1, \ldots, \beta_k\in\mathbb{C}$. In this article, we shall refer to these as \emph{naive trigonometric polynomials}.

It is clear that each of these sets of trigonometric polynomials form a subspace of $C^\infty(\mathbb{R},\mathbb{C})$, but
at first glance it isn't obvious that these two subspaces are in fact distinct.  
An exercise in trigonometric identities, however, shows that any naive trigonometric polynomial can be written as a (standard) trigonometric polynomial. We leave the straightforward details to the reader.

\begin{proposition}\label{NaiveToStd} Any naive trigonometric polynomial can be written as a (standard) trigonometric polynomial.
\end{proposition}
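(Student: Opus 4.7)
The plan is to reduce the problem to a statement about the monomials $\cos^n(t)$ and $\sin^n(t)$ individually. Since the set of (standard) trigonometric polynomials is a subspace of $C^\infty(\mathbb{R},\mathbb{C})$, it is enough to show that for every nonnegative integer $n$, each of $\cos^n(t)$ and $\sin^n(t)$ is a standard trigonometric polynomial; the result for $g(t)$ then follows by taking the appropriate linear combination with coefficients $\alpha_n$ and $\beta_n$.

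To establish the claim for $\cos^n(t)$ and $\sin^n(t)$, I would use Euler's formulas $\cos t = \tfrac{1}{2}(e^{it}+e^{-it})$ and $\sin t = \tfrac{1}{2i}(e^{it}-e^{-it})$, apply the binomial theorem, and collect terms. In the cosine case this yields
\begin{equation*}
\cos^n(t) = \frac{1}{2^n}\sum_{j=0}^{n}\binom{n}{j}e^{i(n-2j)t},
\end{equation*}
and an analogous expression (with an extra factor of $1/i^n$ and alternating signs) for $\sin^n(t)$. Pairing the terms corresponding to $j$ and $n-j$ and re-applying Euler's formulas in reverse then expresses each power purely in terms of $\cos(mt)$ and $\sin(mt)$ for $0 \le m \le n$, with the parity of $n$ determining whether only cosines or only sines (up to a constant) appear.

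An alternative route, perhaps more elementary, would be induction on $n$ using the product-to-sum identities $\cos(t)\cos(mt) = \tfrac{1}{2}[\cos((m{+}1)t)+\cos((m{-}1)t)]$ and the analogous identities for $\sin(t)\cos(mt)$ and $\sin(t)\sin(mt)$; multiplying the inductive hypothesis for $\cos^{n-1}(t)$ (or $\sin^{n-1}(t)$) by $\cos t$ (or $\sin t$) and distributing keeps the result inside the space of standard trigonometric polynomials.

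There is no real conceptual obstacle here; the main work is bookkeeping, namely verifying that the index shifts in the product-to-sum identities (or the combinatorics of pairing binomial terms) correctly collapse to a sum of the required form. I would also note, as a sanity check, that both approaches show the resulting standard trigonometric polynomial has degree at most $k$, matching the degree of the original naive trigonometric polynomial, so no frequencies higher than $k$ are introduced.
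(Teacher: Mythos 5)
Your proof is correct and follows exactly the route the paper intends: the paper leaves Proposition~\ref{NaiveToStd} as ``an exercise in trigonometric identities,'' and your reduction to the monomials $\cos^n(t)$, $\sin^n(t)$ followed by the Euler-formula/binomial expansion (or the equivalent product-to-sum induction) supplies precisely those omitted details. The degree bound you note is a correct and worthwhile sanity check.
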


What is curious (and perhaps less clear) is that not all trigonometric polynomials can written as naive trigonometric polynomials. It is known that $\cos(nt) = T_n(\cos t) $ where $T_n$ is the $n$-th \emph{Chebyshev polynomial of the first kind}, and thus $\cos(nt)$ is expressible as a naive trigonometric polynomial.  For the sine terms, one has the identity:  $\sin(nt) =  (\sin t)\cdot U_{n-1}(\cos t)$ where $U_{n-1}$ is the $(n-1)$-st \emph{Chebyshev polynomial of the second kind}.  Both $T_n(x)$ and $U_n(x)$ are polynomials of degree $n$ which are even or odd functions, involving only even or odd powers of $x$, according as $n$ is even or odd. It follows that $\sin((2k+1)t)$ can be expressed as a naive trigonometric polynomial as well. For the definition and properties of the Chebyshev polynomials, see  \cite{MR1937591}. Thus, the resolution of our problem is reduced to showing that polynomials of the form $\sin(2kt)$ cannot be expressed as naive trigonometric polynomials. Without too much work, one can provide an elementary argument substantiating this claim and we invite the interested reader to produce one. Our goal here is to prove this fact as an application of the celebrated theorem of B\'ezout from algebraic geometry. The following version of B\'ezout's theorem is suitable for our needs and follows as an immediate consequence of the usual statement of B\'ezout's theorem which involves zero sets of homogeneous polynomials in the complex projective plane $\mathbb{C}P^2$ (see \cites{MR1417938,MR1042981, MR513824}).

\begin{theorem}[B\'ezout's theorem (weak form)]\label{Bezout} Let $p(x,y)$, $q(x,y) \in \mathbb{C}[x,y]$ be complex polynomials of degree $m$, $n$ with $\gcd\{p,q\}=1$. The number of intersection points in $\mathbb{C}^2$ of the two curves $p(x,y)=0$, $q(x,y)=0$ is at most $mn$. 
\label{Bezout thm}
\end{theorem}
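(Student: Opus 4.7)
The plan is to deduce this weak, affine version of B\'ezout's theorem from the standard projective form (in which homogeneous polynomials of degrees $m$ and $n$ in $\mathbb{C}[X,Y,Z]$ with no common factor cut out exactly $mn$ points of $\mathbb{C}P^2$, counted with multiplicity). The idea is simply to homogenize $p$ and $q$, invoke the projective statement, and then restrict attention to the standard affine chart $Z\neq 0$.

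Concretely, I would form the homogenizations $P(X,Y,Z) = Z^m p(X/Z,Y/Z)$ and $Q(X,Y,Z) = Z^n q(X/Z,Y/Z)$, which are homogeneous of degrees $m$ and $n$. Because $p$ has degree \emph{exactly} $m$, its top-degree part is a nonzero homogeneous polynomial in $X,Y$ alone, and this appears in $P$ as the coefficient of $Z^0$; in particular $Z\nmid P$, and by the same reasoning $Z\nmid Q$. The essential technical step is to verify that $\gcd(P,Q)=1$ in $\mathbb{C}[X,Y,Z]$: if $R$ were an irreducible homogeneous common factor, then either $R$ is a scalar multiple of $Z$ (ruled out by the previous observation) or else $R(X,Y,1)$ is a nonconstant polynomial dividing both $P(X,Y,1)=p(X,Y)$ and $Q(X,Y,1)=q(X,Y)$, contradicting the hypothesis $\gcd(p,q)=1$.

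With coprimality of $P$ and $Q$ in hand, the standard projective B\'ezout theorem guarantees that $\{P=0\}\cap\{Q=0\}$ contains at most $mn$ points of $\mathbb{C}P^2$ (exactly $mn$ with multiplicity, but the unweighted bound is all that is needed here). The solutions of $p=q=0$ in $\mathbb{C}^2$ correspond via $(x,y)\mapsto[x:y:1]$ to the subset of these intersection points lying in the affine chart $Z\neq 0$, so their number is also bounded by $mn$, as claimed.

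The work in this argument is almost entirely bookkeeping: tracking degrees through homogenization and confirming that coprimality survives it. The one place where I expect a moment's care is the coprimality step, where one must rule out the possibility that $Z$ itself becomes a spurious common factor of the homogenizations — and the hypothesis that $p$ and $q$ have their stated degrees is precisely what prevents this.
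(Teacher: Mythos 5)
Your argument is correct and follows exactly the route the paper intends: the paper states this weak form as an ``immediate consequence'' of projective B\'ezout and leaves the details unwritten, and your homogenization, the coprimality check (including ruling out $Z$ as a spurious common factor), and the restriction to the chart $Z\neq 0$ supply precisely those details. Nothing further is needed.
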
 

\begin{proposition}\label{sine2tnotnaive} The function $\sin (2kt)$ cannot be represented as a naive trigonometric polynomial.
\end{proposition}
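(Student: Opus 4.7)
To prove Proposition~\ref{sine2tnotnaive}, I would argue by contradiction: suppose $\sin(2kt) = \alpha(\cos t) + \beta(\sin t)$ for some polynomials $\alpha,\beta \in \mathbb{C}[z]$. The plan is to use a simple parity argument to reduce this to a single polynomial identity that must hold on the unit circle, apply B\'ezout's theorem to upgrade that identity to divisibility by $x^2+y^2-1$, and then derive a contradiction from the mismatched parities of two resulting univariate polynomials.

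First, substituting $t \mapsto -t$ into the defining equation yields $\alpha(\cos t) + \beta(-\sin t) = -\sin(2kt)$; subtracting this from the original equation and halving cancels $\alpha$ together with the even part of $\beta$ and leaves
\[
	\sin(2kt) = \beta_o(\sin t),
\]
where $\beta_o$ denotes the odd part of $\beta$. Since $\beta_o$ contains only odd powers of its argument, we may write $\beta_o(y) = y\,\tilde\beta(y^2)$ for some polynomial $\tilde\beta$. Combining with the identity $\sin(2kt) = \sin t \cdot U_{2k-1}(\cos t)$ recalled in the introduction and cancelling $\sin t$ on the dense set where it is nonzero, continuity gives $\tilde\beta(\sin^2 t) = U_{2k-1}(\cos t)$ for every $t \in \mathbb{R}$.

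This is where B\'ezout enters. The polynomial $F(x,y) := \tilde\beta(y^2) - U_{2k-1}(x)$ vanishes at every point of the real unit circle, an infinite set. Since $x^2+y^2-1$ is irreducible over $\mathbb{C}$, the hypothesis $\gcd(F, x^2+y^2-1)=1$ would by Theorem~\ref{Bezout} force only finitely many common zeros. Hence $x^2+y^2-1 \mid F$, and reducing $F$ modulo this polynomial (i.e., substituting $y^2 = 1-x^2$) produces the identity in $\mathbb{C}[x]$
\[
	\tilde\beta(1-x^2) = U_{2k-1}(x).
\]

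The contradiction is now immediate: the left-hand side is a polynomial in $x^2$, hence an even function of $x$, while $U_{2k-1}$ is odd (the parity of $U_n$ matches that of $n$). A polynomial that is both even and odd must vanish identically, contradicting the fact that $U_{2k-1}$ has degree $2k-1$ and leading coefficient $2^{2k-1}$. The step most deserving of care is the passage from the real-analytic identity $\tilde\beta(\sin^2 t) = U_{2k-1}(\cos t)$ to an honest univariate polynomial identity in $\mathbb{C}[x]$: the first move relies on the irreducibility of $x^2+y^2-1$ over $\mathbb{C}$, which is what lets B\'ezout upgrade ``infinitely many common zeros'' to ``shared irreducible factor,'' while the second uses that $\{1,y\}$ is a $\mathbb{C}[x]$-basis for the quotient $\mathbb{C}[x,y]/(x^2+y^2-1)$, so that an element of $\mathbb{C}[x]$ which vanishes in the quotient already vanishes as a polynomial in $x$.
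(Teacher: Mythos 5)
Your proof is correct, but it takes a genuinely different route from the paper's. The paper keeps both unknown polynomials in play: it applies Lemma~\ref{polyrelations} to the two-variable relation $U_{2k-1}(x)y - P(x) - Q(y) \in (x^2+y^2-1)$ and derives a contradiction by matching monomials of the unknown cofactor $S(x,y)$. You instead begin with the symmetrization $t \mapsto -t$, which eliminates $\alpha$ and the even part of $\beta$ outright and reduces everything to the single univariate identity $\tilde\beta(1-x^2) = U_{2k-1}(x)$, killed by a clean even-versus-odd parity clash; your justification of the passage to $\mathbb{C}[x]$ (via the $\mathbb{C}[x]$-basis $\{1,y\}$ of the quotient) is careful and correct. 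One observation worth making: after your parity reduction, B\'ezout is no longer really needed --- since $\sin^2 t = 1 - \cos^2 t$, the identity $\tilde\beta(\sin^2 t) = U_{2k-1}(\cos t)$ directly gives $\tilde\beta(1-x^2) = U_{2k-1}(x)$ for all $x \in [-1,1]$, hence as polynomials, since they agree on an infinite set. So your argument is essentially the ``elementary proof'' the paper alludes to and deliberately sets aside in favor of showcasing B\'ezout; your invocation of Theorem~\ref{Bezout} is valid but optional. What the paper's approach buys is that Lemma~\ref{polyrelations} does all the analytic work once and for all, leaving a purely algebraic finish and a reusable statement about polynomial relations between $\cos t$ and $\sin t$; what yours buys is brevity and the replacement of monomial bookkeeping by a one-line parity argument.
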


Our proof of proposition~\ref{sine2tnotnaive} uses the following lemma, which states that polynomial relations between $\cos(t)$ and $\sin(t)$ are all consequences of the Pythagorean identity $\cos^2(t) + \sin^2(t) = 1$. For $p(x,y)\in\mathbb{C}[x,y]$, we let $\bigl(p(x,y)\bigr)$ denote the ideal generated by $p(x,y)$.

\begin{lemma}\label{polyrelations}  Let $R(x,y) \in \mathbb{C}[x,y]$.  Then $R\bigl(\cos(t), \sin(t)\bigr) =0$ for every $t\in\mathbb{R}$ if and only if $R(x,y) \in \bigl(x^2+y^2-1\bigr)$.
\end{lemma}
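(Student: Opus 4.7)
The reverse implication is immediate: if $R(x,y) = (x^2+y^2-1)S(x,y)$ for some $S \in \mathbb{C}[x,y]$, then $R(\cos t, \sin t) = 0$ for all $t$ by the Pythagorean identity. So the real work is in the forward direction. My plan there is to perform polynomial long division in the variable $y$. Viewing $R(x,y)$ and $x^2 + y^2 - 1$ as polynomials in $y$ with coefficients in $\mathbb{C}[x]$, the divisor is monic of degree $2$, so the Euclidean algorithm will produce $Q(x,y) \in \mathbb{C}[x,y]$ together with $a(x), b(x) \in \mathbb{C}[x]$ satisfying
\[
R(x,y) = (x^2 + y^2 - 1)\, Q(x,y) + a(x) + b(x)\, y.
\]
The goal then becomes to show that the remainder $a(x) + b(x)\,y$ vanishes identically.

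Substituting $x = \cos t$, $y = \sin t$ kills the $(x^2+y^2-1)Q$ term, so the hypothesis $R(\cos t, \sin t) = 0$ will reduce to the identity
\[
a(\cos t) + b(\cos t)\, \sin t = 0 \qquad \text{for every } t \in \mathbb{R}.
\]
Next I would exploit the parities of $\cos$ and $\sin$: replacing $t$ by $-t$ yields $a(\cos t) - b(\cos t)\sin t = 0$, and then adding and subtracting these two identities decouples them into $a(\cos t) = 0$ and $b(\cos t)\sin t = 0$ for all $t$. Restricting to any open interval on which $\sin t$ is nonzero, the latter gives $b(\cos t) = 0$ on that interval as well.

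Finally, because $\cos t$ attains infinitely many distinct values on any such interval and a nonzero polynomial in $\mathbb{C}[x]$ has only finitely many roots, both $a$ and $b$ must be the zero polynomial. This forces $R(x,y) = (x^2+y^2-1)\,Q(x,y)$, placing $R$ in the ideal $(x^2+y^2-1)$ as required. The overall strategy is really a reduction of the two-variable problem to one-variable polynomial theory via the monicity of $x^2+y^2-1$ in $y$; no step presents a serious obstacle, and the only mildly clever ingredient is the parity trick used to separate $a$ from $b$.
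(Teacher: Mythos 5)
Your argument is correct, but it takes a genuinely different route from the paper. The paper proves the forward direction by contradiction using B\'ezout's theorem: if $R \notin (x^2+y^2-1)$, then since $x^2+y^2-1$ is irreducible we have $\gcd\{R,\, x^2+y^2-1\}=1$, so the two curves meet in at most $2\deg(R)$ points, contradicting the fact that $(\cos t, \sin t)$ traces infinitely many points of the circle. You instead divide $R$ by $x^2+y^2-1$ in the variable $y$ (legitimate since the divisor is monic of degree $2$ in $y$), reduce to the identity $a(\cos t) + b(\cos t)\sin t \equiv 0$, decouple $a$ and $b$ with the parity substitution $t \mapsto -t$, and kill each by the fact that a nonzero one-variable polynomial has finitely many roots while $\cos t$ takes infinitely many values. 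Every step checks out, including the restriction to an interval where $\sin t \neq 0$ to extract $b(\cos t)=0$. What your approach buys is elementarity and self-containment: it needs nothing beyond the division algorithm in $\mathbb{C}[x][y]$, whereas the paper's whole purpose is to showcase B\'ezout's theorem, so it deliberately routes through that machinery. What the paper's approach buys is generality: as noted in its concluding remark, the B\'ezout argument applies verbatim to any pair of functions with infinite range satisfying some polynomial relation (e.g.\ $\cosh t$ and $\sinh t$), while your division trick leans on the specific fact that $x^2+y^2-1$ is monic in $y$ and on the even/odd symmetry of $\cos$ and $\sin$, and so would need to be adapted case by case.
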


\begin{proof}
If $R(x,y) \in \bigl(x^2+y^2-1\bigr)$ then clearly $R\bigl(\cos(t), \sin(t)\bigr) \equiv 0$.  Conversely, suppose that $R\bigl(\cos(t), \sin(t)\bigr)$ is identically zero and suppose that $R(x,y) \not\in (x^2+y^2-1)$.  Since $x^2+y^2-1$ is \emph{irreducible}, by B\'ezout's theorem, it follows that the number of intersection points of the curves
$R(x,y)=0$ and $x^2+y^2-1=0$ is at most $2\cdot\mathrm{deg}(R)$. This implies that $\cos(t) $ and $\sin (t)$ take on only finitely many values as $t$ ranges over all real numbers, an obvious contradiction.  
\end{proof}

We are now ready to prove proposition~\ref{sine2tnotnaive}.

\begin{proof}[Proof of proposition~\ref{sine2tnotnaive}] Suppose that there are single-variable complex polynomials $P, Q$ such that 
$\sin(2kt) = P(\cos (t)) + Q(\sin (t))$ for all $t\in \mathbb{R}$. Using the identity $\sin(2kt) = (\sin t)\cdot U_{2k-1}(\cos t)$ we deduce that $(\sin t)\cdot U_{2k-1}(\cos t) = P(\cos(t)) + Q(\sin(t))$.  From lemma~\ref{polyrelations}, it follows that $U_{2k-1}(x)y-P(x)-Q(y) \in \bigl(x^2+y^2-1\bigr)$. That is, there is a polynomial $S(x,y)$ such that 
$$U_{2k-1}(x)y-P(x)-Q(y) = S(x,y)(x^2+y^2-1).$$
Let $x^\alpha y^\beta$ be any monomial of $S$.  If $\alpha>0$ then the right hand side has a term $x^\alpha y^{2+\beta}$ which is not matched by any term on the left hand side.  So every monomial of $S$ has the form $y^\beta$. However, if $\beta >0$ then the right-side term $x^2y^\beta$ is not matched on the left side since $U_{2k-1}(x)$ is an odd polynomial.  Thus, $S(x,y)$ is reduced to a constant which is clearly impossible.
\end{proof}

We conclude by noting that the argument of lemma~\ref{polyrelations} using B\'ezout's theorem can be applied to show that any pair of functions $f(t)$, $g(t)$, each with a range of infinite cardinality, can satisfy essentially at most one polynomial relation. Thus, for example, any polynomial relation of $x = \cosh t$ and $y = \sinh t$ must be a consequence of $x^2-y^2=1$.

\begin{bibdiv} 
\begin{biblist}

\bibselect{ref}
 
\end{biblist} 
\end{bibdiv}

\end{document}